\newcommand{\mR}{\mathbb{R}}                    
\newcommand{\mC}{\mathbb{C}}                    
\newcommand{\R}{\mathbb{R}}                    
\newcommand{\C}{\mathbb{C}}                    
\newcommand{\abs}[1]{\lvert #1 \rvert}          
\newcommand{\norm}[1]{\lVert #1 \rVert}         
\newcommand{\mO}{\mathcal{O}}
\newcommand{\re}{\mathrm{Re}}
\newcommand{\im}{\mathrm{Im}}
\newcommand{\dbar}{\overline{\partial}}
\newcommand{\p}{\partial}
\newcommand{\tbl}{\textcolor{blue}}
\newcounter{sidenote}
\theoremstyle{definition}
\newtheorem{thm}{Theorem}[section]
\newtheorem{prop}[thm]{Proposition}
\newtheorem{cor}[thm]{Corollary}
\newtheorem{lemma}[thm]{Lemma}
\newtheorem{conjecture}{Conjecture}[section]
\newtheorem*{definition}{Definition}
\newtheorem*{remark}{Remark}
\numberwithin{equation}{section}
\title[The linearized Calder\'on problem on complex manifolds]{The linearized Calder\'on problem on complex manifolds}
\author[C. Guillarmou]{Colin Guillarmou}
\address{Laboratoire de Math\'ematiques d'Orsay, Universit\'e Paris-Sud, CNRS, Universit\'e Paris-Saclay, 91405 Orsay, France}
\email{cguillar@math.cnrs.fr}
\author[M. Salo]{Mikko Salo}
\address{University of Jyvaskyla, Department of Mathematics and Statistics, PO Box 35, 40014 University of Jyvaskyla, Finland}
\email{mikko.j.salo@jyu.fi}
\author[L. Tzou]{Leo Tzou}
\address{School of Mathematics and Statistics, University of Sydney, Sydney, Australia}
\email{leo@maths.usyd.edu.au}
\begin{document}

\dedicatory{Dedicated to Carlos Kenig on the occasion of his 65th birthday}

\begin{abstract}
In this note we show that on any compact subdomain of a K\"ahler manifold that admits sufficiently many global holomorphic functions, the products of harmonic functions form a complete set. This gives a positive answer to the linearized anisotropic Calder\'on problem on a class of complex manifolds that includes compact subdomains of Stein manifolds and sufficiently small subdomains of K\"ahler manifolds. Some of these manifolds do not admit limiting Carleman weights, and thus cannot by treated by standard methods for the Calder\'on problem in higher dimensions. The argument is based on constructing Morse holomorphic functions with approximately prescribed critical points. This extends results of \cite{GT2} from the case of Riemann surfaces to higher dimensional complex manifolds.
\end{abstract}

\maketitle

\section{Introduction} \label{sec_intro}

If $(M,g)$ is a compact connected oriented Riemannian manifold with $C^{\infty}$ boundary $\partial M$ and if $q \in C^{\infty}(M)$, we consider the Dirichlet problem for the Schr\"odinger equation,
\begin{equation*}
(\Delta_g + q)u = 0 \text{ in } M, \quad u|_{\partial M} = f.
\end{equation*}
Here $\Delta_g$ is the positive Laplace-Beltrami operator. Assuming that $0$ is not a Dirichlet eigenvalue, one defines the Dirichlet-to-Neumann map (DN map) 
\begin{equation*}
\Lambda_{g,q}: C^{\infty}(\partial M) \to C^{\infty}(\partial M), \ f \mapsto \partial_{\nu} u|_{\partial M}.
\end{equation*}
The question studied here is the unique determination of the potential $q$ from the knowledge of the DN map $\Lambda_{g,q}$, provided that $(M,g)$ is also known:

\begin{conjecture}[Calder\'on problem for Schr\"odinger equation] \label{conjecture_main}
Let $(M,g)$ be a compact Riemannian manifold with smooth boundary, and assume that $q_1, q_2 \in C^{\infty}(M)$ so that $0$ is not a Dirichlet eigenvalue of $\Delta_g + q_j$ in $M$. Then $\Lambda_{g,q_1} = \Lambda_{g,q_2}$ implies that $q_1 = q_2$.
\end{conjecture}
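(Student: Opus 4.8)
The plan is to follow the standard strategy for the Calder\'on problem based on complex geometric optics (CGO) solutions and Alessandrini's integral identity, while being upfront that this statement is open in dimension $\geq 3$ and that the decisive step succeeds only under geometric hypotheses on $(M,g)$ --- which is exactly what the body of this paper supplies in the K\"ahler case.

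First I would reduce to an interior density statement. Boundary determination (Kohn--Vogelius and Lee--Uhlmann type arguments) shows that $\Lambda_{g,q}$ determines $q$ and all normal derivatives of $q$ on $\partial M$; hence one may assume $q_1 = q_2$ to infinite order on $\partial M$, extend both by a common smooth potential across $\partial M$, and work on a slightly enlarged manifold. The usual integral identity then gives, under $\Lambda_{g,q_1} = \Lambda_{g,q_2}$,
\[
\int_M (q_1 - q_2)\, u_1 u_2 \, dV_g = 0
\]
for all $u_j \in H^1(M)$ solving $(\Delta_g + q_j) u_j = 0$ in $M$. Thus $q_1 = q_2$ would follow once one knows that the products $u_1 u_2$ have dense span in $L^1(M)$: this is the linearized Calder\'on problem at the background $q = 0$, together with a perturbation argument to absorb the potentials.

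The crux is producing enough solutions. On a conformally transversally anisotropic manifold one uses a limiting Carleman weight $\varphi$ to build $u = e^{\tau(\varphi + i\psi)}(a + r_\tau)$ with $\|r_\tau\|_{L^2} \to 0$ as $\tau \to \infty$ (via a Carleman estimate for $\Delta_g + q$); the product of two such families, after stationary/non-stationary phase in $\tau$, recovers $q$ through an attenuated geodesic ray transform on the transversal manifold, and injectivity of that transform closes the argument in that class. The approach of this paper, valid on a K\"ahler manifold with sufficiently many global holomorphic functions, is instead to replace $\varphi$ by a Morse holomorphic function $\Phi$ whose critical points can be placed approximately wherever one wishes; solutions $e^{\tau\Phi}(a + r_\tau)$ then concentrate as $\tau \to \infty$ at the critical points of $\re \Phi$, and sweeping those points through $M$ recovers $q$ pointwise.

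The main obstacle --- and the reason this is still only a conjecture --- is the availability of such phase functions on a general Riemannian manifold: limiting Carleman weights exist only for a rigid conformal class of metrics, and global holomorphic (or harmonic) Morse functions with freely prescribable critical points need not exist without extra structure. Even when suitable phases are at hand, one must control the remainder $r_\tau$ uniformly, which requires solvability/Carleman estimates that degenerate precisely near the critical points of the phase, and one must verify that the critical points can be distributed densely and in enough independent directions to pin down $q$ rather than an averaged transform of it. By contrast, the passage from density of products of harmonic functions to the full statement with $q_j \neq 0$ is comparatively soft --- a Neumann-series perturbation using the same CGO solutions with the potential retained --- so essentially all the difficulty is concentrated in the complex-analytic construction of phases, which is what the remainder of the paper carries out in the K\"ahler setting.
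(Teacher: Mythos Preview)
The statement you were asked to prove is labelled \emph{Conjecture} in the paper, and the paper does not prove it. You correctly identify that it is open in general in dimension $\geq 3$, so in that sense your write-up is an honest survey rather than a proof attempt. But your discussion contains a substantive error about what this paper actually achieves and where the difficulty lies.

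You claim that ``the passage from density of products of harmonic functions to the full statement with $q_j \neq 0$ is comparatively soft --- a Neumann-series perturbation using the same CGO solutions with the potential retained,'' and that ``essentially all the difficulty is concentrated in the complex-analytic construction of phases, which is what the remainder of the paper carries out in the K\"ahler setting.'' This is backwards. The paper proves only the \emph{linearized} statement (Conjecture~\ref{conjecture_main2}, Theorem~\ref{thm_main}), and explicitly explains that it cannot handle the nonlinear problem: the K\"ahler manifolds under consideration need not admit limiting Carleman weights, so there is no Carleman estimate available to construct approximate solutions $e^{\Phi/h}(a + r_h)$ of $(\Delta_g + q)u = 0$ with controlled remainder. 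The only reason the linearized problem works is that, with $q=0$, the functions $e^{\Phi/h} a$ are \emph{exactly} harmonic (being holomorphic on a K\"ahler manifold), so no remainder is needed at all. The moment a potential is present, that exactness is lost and one is back to needing Carleman estimates, which is precisely the obstruction the paper flags. So the ``soft'' step you describe is in fact the missing ingredient here, not the phase construction.

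A secondary inaccuracy: in the paper's stationary-phase argument the concentration is at the critical points of $\im(\Phi)$ (the phase in $e^{2i\,\im(\Phi)/h}$), not $\re(\Phi)$.
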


This question is closely related to the inverse problem of Calder\'on \cite{C}, which asks to determine a positive function $\sigma \in L^{\infty}(\Omega)$ in a bounded domain $\Omega \subset \mR^n$ from the DN map $\Lambda_{\sigma}: f \mapsto \sigma \partial_{\nu} v|_{\partial \Omega}$ where $v$ solves $\mathrm{div}(\sigma \nabla v) = 0$ in $\Omega$ with $v|_{\partial \Omega} = f$. Here $\sigma$ corresponds to the electrical conductivity of the medium $\Omega$, and $\Lambda_{\sigma}$ encodes voltage and current measurements on $\partial \Omega$. It is well known that if $\sigma$ is sufficiently regular, the substitution $v = \sigma^{-1/2} u$ reduces the Calder\'on problem to solving Conjecture \ref{conjecture_main} when $M = \overline{\Omega}$ and $g$ is the Euclidean metric.

The Calder\'on problem in Euclidean space has been studied intensively and there is a large literature (see \cite{SU, N, Bu} and the survey \cite{Uhlmann2014}). The Calder\'on problem on Riemannian manifolds, also known as the \emph{anisotropic} or \emph{geometric} Calder\'on problem, is substantially more difficult. If $M$ is two-dimensional (i.e.\ a Riemann surface), Conjecture \ref{conjecture_main} was proved in \cite{GT1, GT2} based on the earlier Euclidean case \cite{Bu} (\cite{GT2} considers the more general partial data case, following the Euclidean result \cite{IUY}). For $\dim(M) \geq 3$, much less is known. If all structures are real-analytic there are positive results \cite{LaU, LTU, GS}, but for general $C^{\infty}$ metrics Conjecture \ref{conjecture_main} is only known for a restricted class of manifolds \cite{DKSaU, DKLS}.

One can also consider the linearized version of Conjecture \ref{conjecture_main}. If $(M,g)$ is fixed, this corresponds to asking whether the linearization of the map $q \mapsto \Lambda_{g,q}$ is injective. The linearized Calder\'on problem (linearized at $q=0$) essentially reduces to the following question:

\begin{conjecture}[Linearized Calder\'on problem for Schr\"odinger equation] \label{conjecture_main2}
Let $(M,g)$ be a compact Riemannian manifold with smooth boundary. If $f \in C^{\infty}(M)$ vanishes to infinite order at $\partial M$ and satisfies 
\[
\int_M f u_1 u_2 \,dV_g = 0
\]
for all $u_j \in C^{\infty}(M)$ satisfying $\Delta_g u_j = 0$ in $M$, then $f \equiv 0$.
\end{conjecture}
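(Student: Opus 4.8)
Conjecture~\ref{conjecture_main2} is open in this generality; the plan is to prove it when $(M,g)$ is a compact subdomain of a K\"ahler manifold $(X,g)$ carrying sufficiently many global holomorphic functions, by producing a rich family of harmonic functions whose products oscillate. Write $n=\dim_{\mC}X$. The structural fact to exploit is that, since $g$ is K\"ahler, $\Delta_g\Phi=0$ for every holomorphic $\Phi$ defined near $M$; hence $e^{\tau\Phi}$ is harmonic for every $\tau\in\mR$, and so is its conjugate $e^{-\tau\bar\Phi}$, which is antiholomorphic. Fix $p_0$ in the interior of $M$. The first step is to produce a Morse holomorphic function $\Phi$ near $M$ such that (i) $\Phi$ has a critical point $p$ with $\mathrm{dist}(p,p_0)<\eps$; (ii) $\Phi$ has no critical point on $\partial M$; and (iii) the values of $\im\Phi$ at the finitely many critical points of $\Phi$ in $M$ are pairwise distinct. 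Granting this, set
\[
u_1=e^{\tau\Phi},\qquad u_2=e^{-\tau\bar\Phi}\qquad(\tau>0),
\]
a one-parameter family of harmonic functions (one holomorphic, one antiholomorphic) whose product $u_1u_2=e^{\tau(\Phi-\bar\Phi)}=e^{2i\tau\,\im\Phi}$ is a purely oscillatory exponential with real phase $\im\Phi$, whose critical set coincides with that of $\Phi$.

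The hypothesis on $f$ then gives $\int_M f\,e^{2i\tau\,\im\Phi}\,dV_g=0$ for all $\tau>0$, and the idea is to read off $f$ from the asymptotics as $\tau\to+\infty$. On the part of the integral away from the critical set of $\im\Phi$, repeated integration by parts yields no boundary terms --- this is where the infinite-order vanishing of $f$ at $\partial M$ is used --- and shows this part is $O(\tau^{-\infty})$. Near a critical point $p$, a short linear-algebra computation shows that non-degeneracy of the complex Hessian $\bigl(\p^2\Phi/\p z_i\p z_j\bigr)(p)$ forces the real Hessian of $\im\Phi$ at $p$ to be non-degenerate (indeed of signature $(n,n)$), so stationary phase gives, with nonzero constants $c_p$,
\[
\tau^{n}\int_M f\,e^{2i\tau\,\im\Phi}\,dV_g=\sum_{p\,:\,d\Phi(p)=0}c_p\,f(p)\,e^{2i\tau\,\im\Phi(p)}+o(1)\qquad(\tau\to+\infty).
\]
The left side is identically $0$, so the finite exponential sum tends to $0$; since the frequencies $2\,\im\Phi(p)$ are pairwise distinct, averaging against $e^{-2i\lambda\tau}$ forces every $c_pf(p)$, hence every $f(p)$, to vanish. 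Thus $f$ vanishes at a point within $\eps$ of $p_0$; letting $p_0$ range over the interior of $M$ and $\eps\to0$, continuity of $f$ gives $f\equiv0$.

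The main obstacle is the first step: the construction of a Morse holomorphic function with an approximately prescribed critical point. This is the higher-dimensional counterpart of the Riemann-surface construction of \cite{GT2}, and it is precisely where the hypothesis on $M$ enters. The natural route is transversality: choose global holomorphic functions $g_1,\dots,g_N$ on $X$ whose differentials span the complexified cotangent space at each point near $M$ --- available, e.g., for compact subdomains of Stein manifolds, where a holomorphic embedding into $\mC^N$ supplies coordinates, and for sufficiently small subdomains of an arbitrary K\"ahler manifold --- and consider the affine family $\Phi_c=\sum_j c_jg_j$. One first solves the linear system $d\Phi_c(p_0)=0$ for $c$, then perturbs $c$ within the solution set, if necessary after enlarging the family by the products $g_jg_k$ to control second jets, and invokes Sard's theorem to make the complex Hessian invertible at the (nearby) critical point and to separate the critical values, keeping that critical point close to $p_0$ and off $\partial M$. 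Making this rigorous, and pinning down exactly which K\"ahler manifolds admit such families --- including, as the abstract notes, examples that carry no limiting Carleman weight --- is the technical heart of the argument.
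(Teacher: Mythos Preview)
Your overall strategy is sound and matches the paper's in broad outline: exploit the K\"ahler identity to get harmonic functions from holomorphic and antiholomorphic ones, build a Morse holomorphic phase $\Phi$ with a critical point near any prescribed interior point via a transversality argument, and apply stationary phase to $\int_M f\,e^{2i\tau\,\im\Phi}\,dV_g$. The genuine difference is in how you isolate the contribution of a single critical point. The paper does \emph{not} arrange for the critical values of $\im\Phi$ to be distinct; instead it multiplies in a holomorphic amplitude $a\in\mO(M)$ with $a(p)=1$ and $a(p_j)=0$ at the remaining critical points, so that the test functions are $u_1=e^{\Phi/h}a$ and $u_2=\overline{e^{-\Phi/h}a}$, and stationary phase on $\int_M f\,e^{2i\im\Phi/h}|a|^2\,dV_g$ yields $f(p)=0$ directly. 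Constructing such an $a$ is exactly where the hypothesis that $M$ is \emph{holomorphically separable} enters. Your route---generically separating the critical values of $\im\Phi$ and then using that a finite exponential sum $\sum_p c_p f(p)\,e^{2i\tau\,\im\Phi(p)}$ that is $o(1)$ must have all coefficients zero---is a legitimate alternative and has the pleasant feature that it appears to dispense with holomorphic separability altogether, needing only the ``local charts from global holomorphic functions'' hypothesis that drives the transversality argument. The trade-off is that you must add one more genericity step (distinct critical values, which is indeed generic among Morse functions and can be folded into the Uhlenbeck-style argument), whereas the paper's amplitude trick keeps the stationary-phase step to a single term. Your sketch of the Morse construction via the family $\sum c_j g_j$ enlarged by quadratic terms $g_jg_k$ is on the right track; the paper makes this concrete by taking $\Phi_0=\sum_\ell f_\ell^2$ in a global holomorphic chart $(f_1,\dots,f_n)$ centered at $p_0$, which immediately has a nondegenerate holomorphic Hessian there, and then perturbs to Morse in $C^k$.
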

Here we assume that $f$ vanishes to infinite order at $\partial M$ since we know by boundary determination  \cite{DKSaU} that $\Lambda_{g,q}$ determines $q|_{\partial M}$ to infinite order, and thus the linearization can be reduced to the case of smooth potentials vanishing to any order at $\partial M$.

The methods of \cite{GT2} and \cite{DKLS} also yield a positive answer to Conjecture \ref{conjecture_main2} if $\dim(M) = 2$ or if $\dim(M) \geq 3$, $(M,g)$ is conformally transversally anisotropic, and the geodesic X-ray transform on the transversal manifold is injective (see \cite{DKLS} for more details). However, even the linearized Calder\'on problem remains open in general when $\dim(M) \geq 3$. We refer to \cite{DKLLS} for a recent result related to recovering transversal singularities, and to \cite{DKSjU09, SjU} for results on the linearized partial data problem in Euclidean space.

In this work, our objective is to extend the powerful Riemann surface methods of \cite{GT2} to some class of higher dimensional complex manifolds. It turns out that the linearized problem can be solved on complex manifolds that have sufficiently many global holomorphic functions. Let us first give some related definitions on complex manifolds with boundary.

\begin{definition}
Let $M$ be a compact complex manifold with $C^{\infty}$ boundary, with $\dim_{\mC}(M) = n$. Define 
\[
\mO(M) = \{ f \in C^{\infty}(M) \,;\, f \text{ is holomorphic in } M^{\mathrm{int}} \}.
\]
\begin{enumerate}
\item[(a)]
$M$ is \emph{holomorphically separable} if for any $x, y \in M$ with $x \neq y$, there is some $f \in \mO(M)$ with $f(x) \neq f(y)$.
\item[(b)] 
$M$ has \emph{local charts given by global holomorphic functions} if for any $p \in M$ there exist $f_1, \ldots, f_n \in \mO(M)$ which form a complex coordinate system near $p$.
\end{enumerate}
\end{definition}

The above definitions often appear in connection with Stein manifolds. Let $X$ be complex manifold without boundary, let $\dim_{\mC}(X) = n$, and let $\mO(X)$ be the set of holomorphic functions on $X$. One says that $X$ is a \emph{Stein manifold} if 
\begin{enumerate}
\item[(i)]
for any $x, y \in X$ with $x \neq y$, there is $f \in \mO(X)$ with $f(x) \neq f(y)$;
\item[(ii)] 
for any $p \in X$ there exist $f_1, \ldots, f_n \in \mO(X)$ which form a complex coordinate system near $p$; and 
\item[(iii)] 
$X$ is \emph{holomorphically convex}, meaning that for any compact $K \subset X$ the holomorphically convex hull 
\[
\widehat{K} = \{ p \in X \,;\, |f(p)| \leq \max_{x \in K} |f(x)| \text{ for all } f \in \mO(X) \}
\]
is also compact.
\end{enumerate}

Examples of Stein manifolds include open Riemann surfaces, domains of holomorphy in $\mC^n$, and complex submanifolds of $\mC^N$ that are closed in the relative topology. Conversely, any Stein manifold of complex dimension $n$ admits a proper holomorphic embedding in $\mC^{2n+1}$ (hence nontrivial Stein manifolds are noncompact). See e.g.\ \cite{Forstneric} for these basic facts on Stein manifolds.

The next result gives a positive answer to the linearized Calder\'on problem on complex manifolds that satisfy (a) and (b) above.

\begin{thm} \label{thm_main}
Let $M$ be a compact complex manifold with $C^{\infty}$ boundary. Assume that $M$ is holomorphically separable and has local charts given by global holomorphic functions, and let $g$ be any K\"ahler metric on $M$. If $f \in C^{\infty}(M)$ vanishes to infinite order at $\partial M$ and satisfies 
\[
\int_M f u_1 u_2 \,dV_g = 0
\]
for all $u_j \in C^{\infty}(M)$ satisfying $\Delta_g u_j = 0$ in $M$, then $f \equiv 0$.
\end{thm}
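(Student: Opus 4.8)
The strategy is to exploit the Kähler structure to factor the Laplacian and reduce the completeness of products of harmonic functions to a question about holomorphic functions with prescribed critical behavior, mimicking the Riemann surface argument of \cite{GT2}. First I would note that on a Kähler manifold, for any holomorphic function $\Phi \in \mO(M)$ the function $e^{\Phi}$ is holomorphic, hence harmonic (since $\Delta_g$ annihilates holomorphic functions on a Kähler manifold), and similarly $e^{-\Phi}$; more generally, given two holomorphic functions one builds the harmonic test functions $u_1 = e^{\tau \Phi} a_1$, $u_2 = e^{-\tau \Phi} a_2$ with $a_j$ holomorphic, so that the product $u_1 u_2 = a_1 a_2$ is holomorphic and independent of the large parameter $\tau$. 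To extract information from the orthogonality relation we instead want concentration, so I would use pairs $u_1 = e^{\tau \Phi}a_1$ and $u_2 = e^{\tau \bar{\Phi}}\overline{a_2}$ — wait, $\bar\Phi$ is antiholomorphic, so $e^{\tau\bar\Phi}$ is antiharmonic, hence still harmonic. Then $u_1 u_2 = e^{\tau(\Phi + \bar\Phi)} a_1 \overline{a_2} = e^{2\tau \re \Phi} a_1 \overline{a_2}$, and as $\tau \to \infty$ the weight $e^{2\tau\re\Phi}$ concentrates (after normalization) near the maximum set of $\re \Phi$, which by the maximum principle lies on $\partial M$ unless $\Phi$ is constant — so this naive choice concentrates on the boundary, where $f$ vanishes to infinite order.

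The correct device, following \cite{GT2}, is stationary phase: take $u_1 = e^{i\tau\Psi} a_1$ and $u_2 = e^{i\tau\Psi}a_2$ where $\Psi$ is a \emph{Morse holomorphic function} with a critical point at an interior point $p$, so $d\Psi(p) = 0$. Both factors are harmonic (holomorphic times the harmonic $e^{i\tau\Psi}$; note $e^{i\tau\Psi}$ is holomorphic hence harmonic). Then $u_1 u_2 = e^{2i\tau\Psi} a_1 a_2$, and
\[
\int_M f u_1 u_2 \, dV_g = \int_M e^{2i\tau\Psi} f a_1 a_2 \, dV_g.
\]
Writing $\Psi = \re\Psi + i\im\Psi$ near the critical point $p$ in holomorphic coordinates where $\Psi(z) = \Psi(p) + \sum_j z_j^2 + O(|z|^3)$ (a Morse normal form), the phase $2i\Psi$ has nonvanishing imaginary part away from $p$ on the set where $\re\Psi$ is extremal, and a nondegenerate complex phase with critical point $p$; the method of stationary phase then gives, after multiplying by an appropriate power of $\tau$ and sending $\tau \to \infty$, an asymptotic expansion whose leading term is a nonzero constant times $f(p) a_1(p) a_2(p)$. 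Since the orthogonality forces this to vanish for all admissible $a_1, a_2$, and since hypothesis (b) — local charts by global holomorphic functions — lets us choose $a_1, a_2$ with $a_1(p)a_2(p) \neq 0$ (e.g. $a_1 = a_2 = 1$), we conclude $f(p) = 0$.

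The main obstacle, and the technical heart of the argument, is the construction of a Morse holomorphic function $\Psi \in \mO(M)$ with a critical point at (or arbitrarily close to) an \emph{arbitrary} prescribed interior point $p$. In the Riemann surface case this is done in \cite{GT2} using the Riemann–Roch theorem and a transversality argument; here one must instead use the Stein-type hypotheses. I would argue as follows: by hypothesis (b) there exist $f_1, \dots, f_n \in \mO(M)$ forming a holomorphic coordinate chart near $p$, so some $\mC$-linear combination, say $f_1$ after relabeling, is nonconstant with $df_1(p) \neq 0$; then the function $\Psi_0(x) = \tfrac12\sum_{j} (f_j(x) - f_j(p))^2$ lies in $\mO(M)$, has $d\Psi_0(p) = 0$, and has a nondegenerate Hessian at $p$ in those coordinates — that is, $p$ is already a nondegenerate critical point. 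One then perturbs $\Psi_0$ by a small generic element of $\mO(M)$ (using holomorphic separability and the local charts to generate enough functions, together with a Sard/transversality argument on the finite-dimensional space of perturbations) to ensure that \emph{all} critical points of the resulting $\Psi$ are nondegenerate and that their images under $\Psi$ are distinct, so that stationary phase at $p$ is not contaminated by contributions from other critical points sharing the same critical value. An alternative, cleaner route — likely the one to pursue — is to localize: since $f$ vanishes near $\partial M$, one can cut off and need only control the phase on a neighborhood of $\supp f$, applying stationary phase with the explicitly constructed $\Psi_0$ near $p$ and absorbing the rest into a remainder that is $O(\tau^{-\infty})$ because $\im(2\Psi_0) \neq 0$ there; this sidesteps the global Morse condition. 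Either way, verifying the nonvanishing of the leading stationary-phase coefficient and the $\tau^{-\infty}$ smallness of the complementary region are the steps requiring the most care.
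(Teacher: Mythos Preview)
Your overall strategy matches the paper's: build a holomorphic phase with a nondegenerate critical point at $p$ via $\Psi_0=\sum_j (f_j-f_j(p))^2$ (this is exactly Lemma~\ref{lemma_phase_function_first}), perturb to Morse, and apply stationary phase. But the specific choice of harmonic test functions is wrong, and this is a genuine gap, not a technicality.

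You take $u_1=e^{i\tau\Psi}a_1$ and $u_2=e^{i\tau\Psi}a_2$, both holomorphic. The product is $e^{2i\tau\Psi}a_1a_2$, and the ``phase'' $2\Psi$ is \emph{holomorphic}, not real. Its imaginary part $2\im\Psi$ is harmonic and therefore takes both signs in every neighbourhood of the interior critical point $p$, so neither real stationary phase nor H\"ormander's complex stationary phase (which needs $\im\varphi\geq 0$) applies. Concretely, already in one complex variable with $\Psi=z^2$ one checks, using the mean-value property for holomorphic functions, that $\int_{B_R}e^{2i\tau z^2}g\,dA$ is a polynomial in $\tau$ whose coefficients depend on moments of $g$, not an asymptotic series with leading term $c\,\tau^{-n}g(0)$; for instance with $g=\bar z^2$ one gets a term linear in $\tau$ even though $g(0)=0$. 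So no version of stationary phase extracts $f(p)$ from your integral. The paper's fix is the one you discarded one sign too early: pair the holomorphic $u_1=e^{\Phi/h}a$ with the \emph{antiholomorphic} $u_2=e^{-\bar\Phi/h}\bar a$ (note the minus sign), so that $u_1u_2=e^{2i\im(\Phi)/h}|a|^2$ has a genuinely real Morse phase $\im\Phi$, and ordinary stationary phase applies.

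Your treatment of the other critical points is also off. Arranging distinct critical \emph{values} does not suppress their contributions: with a real Morse phase every nondegenerate critical point contributes at the same order $h^{n}$, with different oscillatory prefactors but the same magnitude. Localizing does not help either, since the extra critical points may lie in $\supp f$. The paper instead uses the hypothesis of holomorphic separability directly: for each unwanted critical point $p_j$ choose $a_j\in\mO(M)$ with $a_j(p)=1$, $a_j(p_j)=0$, and take $a=\prod_j a_j$; then $|a|^2$ kills the stationary-phase contributions at $p_1,\dots,p_N$ while preserving the one at $p$. The global Morse property (needed so that the critical set is finite) is obtained by a transversality perturbation as you suggest, but it only yields a \emph{dense} set $S\subset M$ of admissible base points $p$; that suffices since $f$ is continuous.
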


Here are examples of manifolds covered by the above theorem:
\begin{enumerate}
\item[1.] 
If $X$ is a Stein manifold and $g$ is a K\"ahler metric on $X$ (for instance $g$ could be the metric induced by the embedding of $X$ in some $\mC^{N}$), then any compact $C^{\infty}$ subdomain $M$ of $X$ satisfies the conditions in the theorem.
\item[2.] 
More generally, the condition (iii) in the definition of Stein manifolds is not required. For example, any complex submanifold of $\mC^N$ satisfies (i) and (ii) since the functions $f$ and $f_j$ can be constructed from the coordinate functions $z_1, \ldots, z_N$ in $\mC^N$. Hence any compact $C^{\infty}$ subdomain $M$ of some complex submanifold of $\mC^N$, equipped with a K\"ahler metric $g$, satisfies the conditions in the theorem.
\item[3.]
Let $(X,g)$ be a K\"ahler manifold, and let $U$ be a complex coordinate neighborhood in $X$. If $M$ is a compact $C^{\infty}$ subdomain of $U$, then $(M,g)$ satisfies the required conditions: if $f_1, \ldots, f_n$ are complex coordinates in $U$, the conditions (a) and (b) hold just by using the functions $f_1, \ldots, f_n$.
\end{enumerate}

Most of the earlier progress in Conjectures \ref{conjecture_main} and \ref{conjecture_main2} is based on the notion of \emph{limiting Carleman weights} (LCWs), introduced in the Euclidean case in the fundamental work of Kenig, Sj\"ostrand and Uhlmann \cite{KSU}. The notion of LCWs placed the method of complex geometrical optics solutions, which has been used intensively since \cite{SU}, in an abstract general context. LCWs have been particularly useful for partial data problems \cite{KSU, IUY, GT2, KS}, see also the surveys \cite{GT_survey, KS_survey}. The study of LCWs in the geometric case was initiated in \cite{DKSaU} and applied further e.g.\ in \cite{KSaU, KSaU_reconstruction, DKS}. In \cite{DKSaU} it was also shown that the existence of an LCW (at least with nonvanishing gradient) is locally equivalent to a certain conformal symmetry that a generic manifold in dimensions $\geq 3$ will not satisfy \cite{LS, A}. This conformal symmetry has been studied further in \cite{AFGR, AFG} in terms of the structure of the Cotton or Weyl tensors of the manifold.

One of the results in \cite{AFGR} states that $\mC P^2$ with the Fubini-Study metric $g$ does not admit an LCW near any point. Since $(\mC P^2, g)$ is a K\"ahler manifold, compact $C^{\infty}$ subdomains in $\mC P^2$ provide examples of manifolds where Theorem \ref{thm_main} applies but where methods based on LCWs fail.

The proof of Theorem \ref{thm_main} extends the arguments in \cite{Bu} for domains in $\mC$ and \cite{GT2} for Riemann surfaces to the case where $\dim_{\mC}(M) \geq 2$. Here is an outline of the argument:

\begin{enumerate}
\item[1.]
Since $g$ is a K\"ahler metric, we have the factorization 
\[
\Delta_g = 2 \dbar^* \dbar = 2 \partial^* \partial.
\]
Thus holomorphic and antiholomorphic functions are harmonic.
\item[2.] 
We show that there is a dense subset $S$ of $M$ such that for any $p \in S$, there exists a Morse holomorphic function $\Phi$ in $M$ having a critical point at $p$. This is based on the fact that $M$ has local charts given by global holomorphic functions. This allows us to construct a global holomorphic function $\Phi_0$ in $M$ having a prescribed nondegenerate critical point, and further to find Morse functions arbitrarily close to $\Phi_0$ using a transversality argument.
\item[3.] 
The next step is to show that if $\Phi$ is a Morse holomorphic function with critical point at $p$, there is a holomorphic amplitude $a$ such that $a(p) = 1$ and $a$ vanishes at all other critical points of $\Phi$. This follows from the fact that $M$ is holomorphically separable.
\item[4.] 
Finally, for $h > 0$ we define 
\begin{align*}
u_1 &= e^{\Phi/h} a, \\
u_2 &= e^{-\bar{\Phi}/h} \bar{a}.
\end{align*}
Here $u_1$ is holomorphic and $u_2$ is antiholomorphic, hence both are harmonic, and one has 
\[
0 = \int_M f u_1 u_2 \,dV = \int_M f e^{2 i \im(\Phi)/h} |a|^2 \,dV.
\]
Since $\im(\Phi)$ is a Morse function having critical point at $p \in S$, and since $a(p) = 1$ but $a$ vanishes at the other critical points, letting $h \to 0$ (after multiplying by a suitable power of $h$) and applying the stationary phase argument implies that $f(p) = 0$. Since this holds for all $p$ in the dense set $S$, we obtain $f \equiv 0$.
\end{enumerate}

Our argument for treating the linearized Calder\'on problem relies on the fact that we can produce solutions of $\Delta_g u = 0$ out of holomorphic or antiholomorphic functions. Dealing with the full nonlinear Calder\'on problem would require solutions to the Schr\"odinger equation $(-\Delta_g + q) u = 0$, which are typically obtained via Carleman estimates when LCWs are present. However, the manifolds that we are considering do not necessarily admit LCWs, and thus at the moment the methods in this paper are restricted to the linearized problem.

This paper is organized as follows. Section \ref{sec_intro} is the introduction. Section \ref{sec_preliminaries} includes some preliminaries related to complex manifolds, and Section \ref{sec_linearized_inverse} gives the proof of Theorem \ref{thm_main}.

\subsection*{Acknowledgements}

The authors would like to thank Gunther Uhlmann for helpful discussions related to this topic. C.G.\ is partially supported by ERC Consolidator Grant IPFLOW. M.S.\ was supported by the Academy of Finland (Finnish Centre of Excellence in Inverse Problems Research, grant numbers 284715 and 309963) and by the European Research Council under FP7/2007-2013 (ERC StG 307023) and Horizon 2020 (ERC CoG 770924). 

\section{Preliminaries} \label{sec_preliminaries}

In this section we recall standard facts concerning complex and K\"ahler manifolds. We refer to \cite{Huybrechts, Moroianu} for more details.

\subsection{Complex manifolds}

\begin{definition}
An \emph{$n$-dimensional complex manifold} is a $2n$-dimensional smooth (real) manifold with an open cover $U_{\alpha}$ and charts $\varphi_{\alpha}: U_{\alpha} \to \mC^n$ such that $\varphi_{\beta} \circ \varphi_{\alpha}^{-1}$ is holomorphic $\varphi_{\alpha}(U_{\alpha} \cap U_{\beta}) \to \mC^n$.
\end{definition}

\begin{definition}
If $M$ is a differentiable manifold, an \emph{almost complex structure} on $M$ is a $(1,1)$ tensor field $J$ such that the restriction $J_p: T_p M \to T_p M$ satisfies $J_p^2 = -\text{Id}$ for any $p$ in $M$.
\end{definition}

If $M$ is a complex manifold, let $z = (z_1, \ldots, z_n)$ be a holomorphic chart $U_{\alpha} \to \mC^n$, and write $z_j = x_j + i y_j$. There is a canonical almost complex structure $J$ on $M$, defined for holomorphic charts by 
\begin{equation*}
J\left( \frac{\partial}{\partial x_j} \right) = \frac{\partial}{\partial y_j}, \qquad 
J\left( \frac{\partial}{\partial y_j} \right) = -\frac{\partial}{\partial x_j}.
\end{equation*}
Conversely, if $M$ is a differentiable manifold equipped with an almost complex structure $J$ (so it is necessarily even dimensional and orientable), then by the Newlander-Nirenberg theorem $M$ has the structure of a complex manifold if $J$ satisfies an additional integrability condition.

Let $M$ be a complex manifold. From now on we denote by $T_p M$ the complexified tangent space, and by $T_p^* M$ the complexified cotangent space.  We also denote by $J$ the $\mC$-linear extension of the almost complex structure. Locally in holomorphic coordinates, 
\begin{equation*}
T_p M = \mC \left\{ \frac{\partial}{\partial z_j}, \frac{\partial}{\partial \bar{z}_j} \right\}_{j=1}^n, \qquad T_p^* M = \mC \left\{ d z_j, d \bar{z}_j \right\}_{j=1}^n,
\end{equation*}
where 
\begin{equation*}
\frac{\partial}{\partial z_j} =\frac{1}{2} \left( \frac{\partial}{\partial x_j} - i \frac{\partial}{\partial y_j} \right), \qquad \frac{\partial}{\partial \bar{z}_j} = \frac{1}{2} \left( \frac{\partial}{\partial x_j} + i \frac{\partial}{\partial y_j} \right),
\end{equation*}
and the dual basis is 
\begin{equation*}
dz_j = dx_j + i dy_j, \qquad d\bar{z}_j = dx_j - i dy_j.
\end{equation*}

Since $J^2 = -\text{Id}$, the eigenvalues of $J$ acting on $T_p M$ are $\pm i$. We define 
\begin{equation*}
T_p^{1,0}M = \text{Ker}(J-i), \qquad T_p^{0,1} M = \text{Ker}(J+i).
\end{equation*}
These are the holomorphic and antiholomorphic tangent spaces. Locally 
\begin{equation*}
T_p^{1,0} M = \mC \left\{ \frac{\partial}{\partial z_j} \right\}_{j=1}^n, \qquad T_p^{0,1} M = \mC \left\{ \frac{\partial}{\partial \bar{z}_j} \right\}_{j=1}^n.
\end{equation*}
Since $J$ also acts on $T_p^* M$, we define $T_p^{1,0*} M$ and $T_p^{0,1*} M$ in the same way and have that locally 
\begin{equation*}
T_p^{1,0*} M = \mC \left\{ dz_j \right\}_{j=1}^n, \qquad T_p^{0,1*} M = \mC \left\{ d\bar{z}_j \right\}_{j=1}^n.
\end{equation*}

We move on to differential forms. Because of the splitting $T_z^* M = T_z^{1,0*} M \oplus T_z^{0,1*} M$, the set $A^k(M) = C^{\infty}(M ; \Lambda^k M)$ of complex valued $k$-forms on $M$ splits as 
\begin{equation*}
A^k (M) = \bigoplus_{p+q=k} A^{p,q} (M)
\end{equation*}
where $A^{p,q} (M) = C^{\infty}(M ; \Lambda^{p,q} M)$ with 
\begin{equation*}
\Lambda^{p,q}_z M = \left( \bigwedge^p T^{1,0*}_z M \right) \wedge \left( \bigwedge^q T^{0,1*}_z M \right).
\end{equation*}
Locally, any form $u \in A^{p,q}(M)$ can be written as 
\begin{equation*}
u = \sum_{\abs{J}=p,\abs{K}=q} u_{JK} \,dz^J \wedge d\bar{z}^K
\end{equation*}
where $dz^J = dz_1^{j_1} \wedge \ldots \wedge dz_n^{j_p}$ for $j = (j_1,\ldots,j_p)$ etc. Such forms are said to be \emph{of type $(p,q)$}. Clearly 
\begin{equation*}
d: A^{p,q}(M) \to A^{p+1,q}(M) \oplus A^{p,q+1}(M).
\end{equation*}
If $\pi_{p,q}: A^k(M) \to A^{p,q}(M)$ is the natural projection (here $k = p+q$), we can define the $\partial$ and $\dbar$ operators as follows:

\begin{definition}
Let 
\begin{align*}
\partial: A^{p,q}(M) \to A^{p+1,q}(M), & \quad \partial = \pi_{p+1,q} \circ d, \\
\dbar: A^{p,q}(M) \to A^{p,q+1}(M), & \quad \dbar = \pi_{p,q+1} \circ d.
\end{align*}
\end{definition}

Since $d = \partial + \dbar$ and $d^2 = 0$ on forms of type $(p,q)$, we have $(\partial + \dbar)^2 = 0$ on such forms and consequently 
\begin{equation*}
\partial^2 = \dbar^2 = 0, \qquad \partial \dbar + \dbar \partial = 0.
\end{equation*}

\subsection{Hermitian metrics}

For the inverse problem, we want to have a Laplace-Beltrami operator on $M$. For this one needs a Riemannian metric. In the case of complex manifolds, it is natural to assume a compatibility condition.

\begin{definition}
Let $M$ be a complex manifold with almost complex structure $J$. We say that a Riemannian metric $g$ on $M$ is \emph{compatible} with the almost complex structure if $g(Jv,Jw) = g(v,w)$ for all $v$, $w$.
\end{definition}

If $z \in M$, let $h_z(\,\cdot\,,\,\cdot\,) = (\,\cdot\,,\,\cdot\,)_z = (\,\cdot\,,\,\cdot\,)$ be the sesquilinear extension of $g_z$ to the complexified tangent space. Then 
\begin{gather*}
(v,v) > 0 \text{ unless } v = 0, \\
(v,w) = 0 \text{ whenever } v \in T^{1,0} M, w \in T^{0,1} M, \\
\overline{(v,w)} = (\bar{v}, \bar{w}).
\end{gather*}
Conversely, if $(\,\cdot\,,\,\cdot\,)_z$ is a family on symmetric sesquilinear forms on the complex tangent spaces, satisfying the above three conditions and varying smoothly with $z$, then the restriction to the real tangent bundle is a compatible Riemannian metric.

We call $h$ a \emph{Hermitian metric} on $M$, and $(M,h)$ a \emph{Hermitian manifold}. It naturally induces a metric on the complex cotangent spaces. 
One obtains inner products on the exterior powers and also on the space $L^2 A^k(M)$ of complex $k$-forms with $L^2$ coefficients, 
\begin{equation*}
(u, v) = \int_M (u,v)_z \,dV_g(z).
\end{equation*}
The decomposition 
\begin{equation*}
A^k (M) = \bigoplus_{p+q=k} A^{p,q} (M)
\end{equation*}
is orthogonal with respect to this inner product. We extend the Hodge star operator $\ast$ as a complex linear operator on complex forms. Since then 
\begin{equation*}
(u, v) = *u \wedge *\bar{v},
\end{equation*}
the orthogonality implies that $*$ maps $A^{p,q}(M)$ to $A^{n-q,n-p}(M)$.

The $L^2$ inner product induced by the Hermitian metric allows us to define the adjoints of $\partial$ and $\dbar$ as operators 
\[
\partial^*: A^{p+1,q}(M) \to A^{p,q}(M), \qquad \dbar^*: A^{p,q+1}(M) \to A^{p,q}(M).
\]
In terms of the Hodge star operator they may be expressed as 
\[
\partial^* = -\ast \dbar \,\ast, \qquad \dbar^* = -\ast \partial \ast.
\]

\subsection{K\"ahler manifolds}

If $(M,h)$ is a Hermitian manifold and $g$ is the corresponding Riemannian metric, we would like to factor the Laplace-Beltrami operator $\Delta = \Delta_g=d^*d$ acting on functions in terms of the $\partial$ and $\dbar$ operators. The situation is particularly simple on K\"ahler manifolds.

\begin{definition}
A Hermitian manifold $(M,h)$ is called \emph{K\"ahler} if the almost complex structure $J$ satisfies $\nabla J = 0$.
\end{definition}

There are several equivalent characterizations. If $(M,h)$ is a Hermitian manifold, the \emph{fundamental form} is the alternating $2$-form acting on real tangent vectors by 
\begin{equation*}
\omega(v,w) = g(Jv,w).
\end{equation*}
The manifold is K\"ahler iff $d\omega = 0$ (and then $\omega$ is called a \emph{K\"ahler form}). Clearly, the metric $g$ can be recovered from the K\"ahler form and vice versa. One knows that on a K\"ahler manifold, near any point there is a smooth function $f$ such that 
\begin{equation*}
\omega = i \partial \dbar f.
\end{equation*}
Thus, the metric on a K\"ahler manifold locally only depends on one function.

The important fact for our purposes is the following (this is a special case of the K\"ahler identities when acting on $0$-forms).

\begin{lemma}
If $(M,h)$ is K\"ahler, then the Laplace-Beltrami operator on functions satisfies 
\begin{equation*}
\Delta = 2 \partial^* \partial = 2 \dbar^* \dbar.
\end{equation*}
\end{lemma}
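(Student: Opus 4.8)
The plan is to reduce the statement to the single identity $\partial^*\partial = \dbar^*\dbar$ on functions, and then to derive this from the K\"ahler identities.

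\emph{Step 1 (reduction).} Recall that on functions $\Delta = d^*d$, and that $d = \partial + \dbar$, hence $d^* = \partial^* + \dbar^*$ on forms. For $f \in C^\infty(M)$ one has $\partial f \in A^{1,0}(M)$ and $\dbar f \in A^{0,1}(M)$; moreover $\dbar^*$ annihilates $(1,0)$-forms and $\partial^*$ annihilates $(0,1)$-forms, because the corresponding target spaces $A^{1,-1}(M)$ and $A^{-1,1}(M)$ are trivial. Therefore
\[
\Delta f = d^*df = d^*(\partial f) + d^*(\dbar f) = \partial^*\partial f + \dbar^*\dbar f ,
\]
and it suffices to prove $\partial^*\partial f = \dbar^*\dbar f$ for every $f$.

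\emph{Step 2 (use of the K\"ahler identities).} Let $\omega$ be the K\"ahler form, let $L = \omega \wedge \,\cdot\,$ be the associated Lefschetz operator on forms, and let $\Lambda = L^*$ be its $L^2$-adjoint. Since $d\omega = 0$, the K\"ahler identities hold; in the form needed here, $\partial^* = i[\Lambda,\dbar]$ and $\dbar^* = -i[\Lambda,\partial]$. Applying the first to $\partial f$ and noting that $\Lambda(\partial f) \in A^{0,-1}(M) = 0$ because $\partial f$ has type $(1,0)$, we get
\[
\partial^*\partial f = i\Lambda\dbar\partial f - i\dbar\Lambda\partial f = i\Lambda\dbar\partial f .
\]
Likewise $\dbar f$ has type $(0,1)$, so $\Lambda(\dbar f) \in A^{-1,0}(M) = 0$, and the second identity gives
\[
\dbar^*\dbar f = -i\Lambda\partial\dbar f + i\partial\Lambda\dbar f = -i\Lambda\partial\dbar f = i\Lambda\dbar\partial f ,
\]
where the last step uses $\partial\dbar + \dbar\partial = 0$. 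Comparing the two displays yields $\partial^*\partial f = \dbar^*\dbar f$, and hence $\Delta = 2\partial^*\partial = 2\dbar^*\dbar$ on functions.

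\emph{Step 3 (where the hypothesis enters, and the main point).} The only substantive input is the pair of K\"ahler identities in Step 2, and that is precisely where the assumption $\nabla J = 0$ (equivalently $d\omega = 0$) is used. They are proved in the standard way: around any point of $M$ one chooses holomorphic coordinates in which the metric agrees with the Euclidean metric of $\mC^n$ to second order at that point (K\"ahler normal coordinates, whose existence is equivalent to $d\omega = 0$); since $\partial^*$, $\dbar^*$, $\Lambda$ involve the metric and at most its first derivatives, at the centre of such a chart both sides of each identity coincide with their flat $\mC^n$ counterparts, where the identities are checked by a direct computation, and the base point being arbitrary the identities hold on all of $M$. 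One can equally bypass the general identities and compute directly in K\"ahler normal coordinates: at the centre, $\Delta$, $\partial^*\partial$, and $\dbar^*\dbar$ all reduce to their flat expressions, and on $\mC^n$ one has $\Delta = -4\sum_j \partial_{z_j}\partial_{\bar z_j} = 2\partial^*\partial = 2\dbar^*\dbar$ by an elementary calculation. I expect this normal-coordinate reduction (or, in the other route, the verification of the K\"ahler identities) to be the crux; the degree/type bookkeeping of Steps 1--2 is routine.
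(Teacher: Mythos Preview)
Your proof is correct, and it follows exactly the route the paper indicates: the paper does not write out a proof but simply remarks that the identity is ``a special case of the K\"ahler identities when acting on $0$-forms,'' which is precisely what you unpack in Steps~1--2. Your type bookkeeping and use of $[\Lambda,\dbar]=-i\partial^*$, $[\Lambda,\partial]=i\dbar^*$ are standard and accurate, so there is nothing to add.
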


Here $\partial^*$ and $\dbar^*$ are the formal adjoints of $\partial$ and $\dbar$ in the $L^2$ inner product as discussed above. Thus 
\begin{equation*}
\partial^*: A^{1,0}(M) \to C^{\infty}(M), \qquad \dbar^*: A^{0,1}(M) \to C^{\infty}(M).
\end{equation*}
The following immediate consequence is the crucial point for solving the linearized Calder\'on problem.

\begin{lemma}
On a K\"ahler manifold, the real and imaginary parts of holomorphic or antiholomorphic functions are harmonic.
\end{lemma}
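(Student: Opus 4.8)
The statement should follow essentially formally from the preceding lemma, so the plan is to reduce everything to the factorization $\Delta = 2\partial^* \partial = 2\dbar^* \dbar$. First I would observe that a function $u \in C^\infty(M)$ is holomorphic precisely when $\dbar u = 0$, and antiholomorphic precisely when $\partial u = 0$ (this is just the definition of holomorphy in local charts, combined with the fact that $\dbar$ is the $(0,1)$-part of $d$). Thus, if $u$ is holomorphic, then $\Delta u = 2 \dbar^* \dbar u = 0$, and if $u$ is antiholomorphic, then $\Delta u = 2 \partial^* \partial u = 0$. In either case $u$ is harmonic as a complex-valued function.

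Next I would pass from complex-valued harmonic functions to their real and imaginary parts. The point is that the Laplace--Beltrami operator $\Delta_g = d^* d$ has real coefficients in any local real coordinate system, hence commutes with complex conjugation: $\overline{\Delta_g v} = \Delta_g \bar v$ for any $v \in C^\infty(M;\mC)$. Therefore, writing $u = \re(u) + i\, \im(u)$ with $\re(u) = \tfrac12(u + \bar u)$ and $\im(u) = \tfrac{1}{2i}(u - \bar u)$, we get $\Delta_g \re(u) = \tfrac12(\Delta_g u + \overline{\Delta_g u}) = 0$ and similarly $\Delta_g \im(u) = 0$. This gives the claim for holomorphic $u$; the antiholomorphic case is identical (or follows by noting that $u$ is antiholomorphic iff $\bar u$ is holomorphic).

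I do not anticipate any real obstacle here, since all the substantive analytic content — namely the K\"ahler identity $\Delta = 2\dbar^*\dbar = 2\partial^*\partial$ — is already supplied by the previous lemma. The only thing to be mildly careful about is the bookkeeping convention for $\dbar^*$ on $(0,1)$-forms versus $\partial^*$ on $(1,0)$-forms (so that one applies the correct half of the factorization to holomorphic versus antiholomorphic functions), and the elementary remark that $\Delta_g$ is a real operator so that taking real and imaginary parts is legitimate.
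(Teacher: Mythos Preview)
Your argument is correct and is exactly the deduction the paper has in mind: the paper does not write out a proof but simply calls the lemma an ``immediate consequence'' of the factorization $\Delta = 2\partial^*\partial = 2\dbar^*\dbar$, and your proposal spells out precisely that immediate consequence (including the observation that $\Delta_g$ is a real operator so one may pass to real and imaginary parts).
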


\begin{remark}
In the solution of the inverse problem on Riemann surfaces, one needs to construct holomorphic functions in $M$ with prescribed zeros or Taylor series in a finite set of points. This was done in \cite{GT2} by using a version of the Riemann-Roch theorem. There are well known extensions of the Riemann-Roch theorem to higher dimensional complex manifolds, such as the Riemann-Roch-Hirzebruch theorem. In this work, instead of using Riemann-Roch type results, we will construct the required holomorphic functions directly from the assumption that the manifold has local charts given by global holomorphic functions.
\end{remark}

\section{Linearized inverse problem} \label{sec_linearized_inverse}

In this section we will prove Theorem \ref{thm_main}. This will be done by constructing complex geometrical optics solutions to the Laplace equation, obtained from holomorphic functions given in the next proposition.

\begin{prop} \label{prop_complex_cgo}
Let $M$ be a compact complex manifold with $C^{\infty}$ boundary. Assume that $M$ has local charts given by global holomorphic functions, and that $M$ is holomorphically separable. Let also $k \geq 2$. There is a dense subset $S$ of $M$ such that for any point $p \in S$ and for any $h > 0$, there is a holomorphic function 
\[
u = e^{\pm \Phi/h} a
\]
where $\Phi \in C^k(M) \cap \mO(M^{\mathrm{int}})$ satisfies $d\Phi(p) = 0$ and $\im(\Phi)$ is Morse in $M$, and $a \in \mO(M)$ satisfies $a(p) = 1$ and $a(p_1) = \ldots a(p_N) = 0$ where $\{ p, p_1, \ldots, p_N \}$ are the critical points of $\im(\Phi)$ in $M$. The functions $\Phi$ and $a$ are independent of $h$.
\end{prop}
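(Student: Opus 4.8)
The strategy is to carry out the three-step construction outlined in the introduction: (i) build a holomorphic function $\Phi_0$ with a nondegenerate critical point at a prescribed point $p$; (ii) perturb $\Phi_0$ to make $\im(\Phi)$ Morse on all of $M$ while keeping the critical point at $p$; and (iii) use holomorphic separability to produce the amplitude $a$. I would begin by fixing $p \in M^{\mathrm{int}}$ (the dense set $S$ will be the set of interior points for which the construction goes through — in fact a residual set) and using the hypothesis that $M$ has local charts given by global holomorphic functions: choose $f_1, \ldots, f_n \in \mathcal{O}(M)$ forming a holomorphic coordinate system near $p$, normalized so that $f_j(p) = 0$. Then set
\[
\Phi_0 = \sum_{j=1}^n f_j^2.
\]
In the local coordinates $w_j = f_j$ this is $\Phi_0 = \sum w_j^2$, which is holomorphic on $M^{\mathrm{int}}$, lies in $\mathcal{O}(M) \subset C^k(M)$, has $d\Phi_0(p) = 0$, and has a nondegenerate complex Hessian at $p$ (the Hessian is $2\,\mathrm{Id}$ in these coordinates). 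So $\Phi_0$ is a global holomorphic function with a prescribed nondegenerate critical point at $p$.

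Next I would address the Morse condition for $\im(\Phi)$. Note first that for a holomorphic $\Phi$, writing $\Phi = u + iv$ with $u,v$ real harmonic, the critical points of $v = \im(\Phi)$ coincide with the critical points of $u$ and with the zeros of $d\Phi$ (since $\partial \Phi = 2\,\partial_z\Phi\, dz$ componentwise, $d\Phi(q) = 0$ iff all $\partial\Phi/\partial z_j$ vanish at $q$, iff $du(q) = dv(q) = 0$); moreover at such a point the real Hessian of $v$ is nondegenerate precisely when the complex Hessian $(\partial^2\Phi/\partial z_i\partial z_j)$ is nondegenerate. Thus "$\im(\Phi)$ is Morse" is equivalent to "every zero of $d\Phi$ is a nondegenerate critical point of $\Phi$," a purely holomorphic condition. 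To achieve this I would consider the finite-dimensional family
\[
\Phi_c = \Phi_0 + \sum_{k} c_k g_k, \qquad c = (c_k) \in \mathbb{C}^L \text{ small},
\]
where the $g_k$ are taken from a finite collection of global holomorphic functions that, together with their differentials and Hessians, separate points and generate enough jets on $M$ (such functions exist by holomorphic separability and the local-chart hypothesis, applied at finitely many points after a compactness argument — alternatively one can use products of the coordinate functions available near each point of a finite cover). A Sard/transversality argument — exactly as in \cite{GT2} but now for the map $(q,c) \mapsto d\Phi_c(q)$ into $T^*M$ — shows that for $c$ in a residual (hence dense) set of a neighborhood of $0$ in $\mathbb{C}^L$, the section $d\Phi_c$ is transverse to the zero section, i.e.\ all critical points of $\Phi_c$ are nondegenerate; and since $d\Phi_0(p)=0$ nondegenerately, for small $c$ there is still a nondegenerate critical point near $p$, which after a further small holomorphic correction (or by the implicit function theorem, absorbing a translation of coordinates) can be placed exactly at $p$. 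Taking such a $c$ with $\|c\|$ small gives $\Phi \in \mathcal{O}(M) \subset C^k(M) \cap \mathcal{O}(M^{\mathrm{int}})$ with $d\Phi(p) = 0$ and $\im(\Phi)$ Morse. Since the critical points of a Morse function are isolated and $M$ is compact, there are finitely many of them, say $\{p, p_1, \ldots, p_N\}$. This transversality step is the main obstacle: one must set up the finite-dimensional perturbation family carefully so that the parametrized transversality theorem applies uniformly over $M$ (handling the boundary and the $C^k$ versus holomorphic regularity), and one must ensure the perturbation does not destroy the critical point at $p$ — the latter is handled by nondegeneracy plus the implicit function theorem.

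Finally, for the amplitude: since $M$ is holomorphically separable, for each $i = 1, \ldots, N$ there is $b_i \in \mathcal{O}(M)$ with $b_i(p) \neq b_i(p_i)$; subtracting the constant $b_i(p_i)$ and rescaling, I get $h_i \in \mathcal{O}(M)$ with $h_i(p_i) = 0$ and $h_i(p) = 1$. Then
\[
a = \prod_{i=1}^N h_i \in \mathcal{O}(M)
\]
satisfies $a(p) = 1$ and $a(p_i) = 0$ for all $i$. Setting $u = e^{\pm\Phi/h} a$, which is holomorphic in $M^{\mathrm{int}}$ (a product and composition of holomorphic functions) and lies in $C^k(M)$ since $\Phi \in C^k(M)$ and $a \in C^\infty(M)$, completes the construction; $\Phi$ and $a$ depend only on $p$, not on $h$. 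The density of $S$ follows because the whole construction can be run starting from any $p$ in a dense (indeed open and dense, namely interior) subset of $M$ — one checks the set of admissible $p$ is dense by noting that near any point of $M^{\mathrm{int}}$ we have local holomorphic charts, so the initial $\Phi_0 = \sum f_j^2$ is available, and the subsequent transversality perturbation works for that $p$.
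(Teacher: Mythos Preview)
Your approach shares the paper's skeleton---the same initial $\Phi_0 = \sum f_j^2$, a transversality argument for the Morse property, and the product amplitude $a = \prod h_i$---but the middle step is organized differently. The paper runs the transversality argument on the full infinite-dimensional Banach space $H = \{(\re f, \im f): f \in C^k(M)\cap \mO(M^{\mathrm{int}})\}$, invoking Uhlenbeck's parametric transversality theorem to show Morse functions are residual in $H$; it then does \emph{not} try to keep the critical point at the original $p_0$, but rather takes Morse approximants $\Phi^{(l)} \to \Phi_0$ and notes, via nondegeneracy and the inverse function theorem, that their critical points $p^{(l)}$ converge to $p_0$, so the set $S$ of points arising as a critical point of \emph{some} Morse holomorphic function is dense. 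Your route---a finite-dimensional family assembled by compactness, plus Sard, plus a linear correction $\sum_j \lambda_j f_j$ (with $\lambda$ solving $\sum_j \lambda_j\, df_j(p) = -d\Phi_c(p)$, hence small when $c$ is small) to restore the critical point to $p$ and then invoking $C^2$-openness of the Morse condition---is a legitimate alternative and actually yields the stronger conclusion $S = M^{\mathrm{int}}$. The only place your write-up is loose is the phrase ``absorbing a translation of coordinates'': there is no global translation on $M$, so the correction must be the additive one you also mention, and that step deserves to be written out explicitly.
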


\begin{proof}[Proof of Theorem \ref{thm_main}]
Suppose that $f \in C^{\infty}(M)$ vanishes to infinite order at $\partial M$ and 
\[
\int_M f u_1 u_2 \,dV_g = 0
\]
for all $u_j \in C^{\infty}(M)$ with $\Delta_g u_j = 0$ in $M$. Since any harmonic function in $H^1(M)$ can be approximated by $C^{\infty}$ harmonic functions by smoothing out its boundary data, we may assume that the above identity holds for harmonic functions in $C^k(M)$ where $k \geq 2$.

We now use Proposition \ref{prop_complex_cgo}: for any point $p$ in the dense subset of $S$ and for $h > 0$, we choose holomorphic functions 
\begin{align*}
u_1 &= e^{\Phi/h} a, \\
v_2 &= e^{-\Phi/h} a
\end{align*}
so that $\im(\Phi)$ is Morse with critical points $\{ p, p_1, \ldots, p_N \}$, $a(p) = 1$, and $a(p_1) = \ldots = a(p_N) = 0$. Define $u_2 = \bar{v}_2$. Since $g$ is a K\"ahler metric on $M$, $u_1$ and $u_2$ are harmonic functions in $M$. We obtain that 
\[
\int_M f e^{2i\im(\Phi)/h} |a|^2 \,dV_g = 0
\]
for all $h > 0$. Since $p$ is a critical point of $\im(\Phi)$, since $a(p) = 1$ but $a$ vanishes at all the other critical points of $p$, and since $f$ vanishes to infinite order on $\partial M$, the stationary phase argument implies that $f(p) = 0$. Since this is true for all $p$ in a dense subset of $M$, it follows that $f \equiv 0$.
\end{proof}

The next result is the first step in constructing holomorphic functions with prescribed critical points. We will later need to perturb these functions so that their real and imaginary parts become Morse.

\begin{lemma} \label{lemma_phase_function_first}
Let $M$ be a compact complex manifold with $C^{\infty}$ boundary. Assume that $M$ has local charts given by global holomorphic functions. Then for any $p \in M$, there exists $\Phi \in \mO(M)$ such that $p$ is a nondegenerate critical point of both $\re(\Phi)$ and $\im(\Phi)$.
\end{lemma}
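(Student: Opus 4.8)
The assumption gives, for the fixed point $p$, global holomorphic functions $f_1, \ldots, f_n \in \mO(M)$ that form a complex coordinate system near $p$; after composing with an affine change of coordinates in $\mC^n$ we may assume $f_j(p) = 0$ and that $(f_1, \ldots, f_n)$ is a biholomorphism from a neighborhood of $p$ onto a neighborhood of $0$ in $\mC^n$. The natural candidate for the phase is a holomorphic quadratic form in these coordinate functions, namely
\[
\Phi = \sum_{j=1}^n c_j f_j^2
\]
with constants $c_j \in \mC$ to be chosen. Since each $f_j$ is globally defined and holomorphic on $M$, so is $\Phi$, and $\Phi \in \mO(M)$. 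Because $f_j(p) = 0$ and $df_j(p)$ are linearly independent, $d\Phi(p) = \sum_j 2 c_j f_j(p)\, df_j(p) = 0$, so $p$ is automatically a critical point of $\Phi$, hence of both $\re(\Phi)$ and $\im(\Phi)$.

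**The Hessian computation.** It remains to choose the $c_j$ so that $p$ is a \emph{nondegenerate} critical point of $\re(\Phi)$ and of $\im(\Phi)$ as functions on the $2n$-dimensional real manifold $M$. Working in the real coordinates $x_j = \re(f_j)$, $y_j = \im(f_j)$ (valid near $p$), and writing $c_j = \alpha_j + i\beta_j$, we have near $p$, up to higher order,
\[
\Phi = \sum_{j=1}^n (\alpha_j + i\beta_j)(x_j + i y_j)^2 = \sum_{j=1}^n \bigl[ \alpha_j(x_j^2 - y_j^2) - 2\beta_j x_j y_j \bigr] + i \sum_{j=1}^n \bigl[ \beta_j(x_j^2 - y_j^2) + 2\alpha_j x_j y_j \bigr].
\]
So the real Hessian of $\re(\Phi)$ at $p$ is block-diagonal in the pairs $(x_j, y_j)$, with $j$-th block $2\begin{pmatrix} \alpha_j & -\beta_j \\ -\beta_j & -\alpha_j \end{pmatrix}$, whose determinant is $-4(\alpha_j^2 + \beta_j^2) = -4|c_j|^2$. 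Likewise the $j$-th block of the Hessian of $\im(\Phi)$ is $2\begin{pmatrix} \beta_j & \alpha_j \\ \alpha_j & -\beta_j \end{pmatrix}$, with determinant $-4|c_j|^2$. Hence both Hessians are nondegenerate as soon as $c_j \neq 0$ for every $j$ — indeed each is a direct sum of saddle-type $2\times 2$ blocks, so both $\re(\Phi)$ and $\im(\Phi)$ have at $p$ a nondegenerate critical point of signature $(n,n)$. Choosing, say, $c_j = 1$ for all $j$ finishes the construction.

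**Where the difficulty lies.** There is essentially no analytic obstacle here: the hypothesis that $M$ has local charts given by global holomorphic functions is doing all the work, converting the local existence of a coordinate chart into a global holomorphic function on $M$. The only point requiring a little care is bookkeeping: verifying that the full real Hessian (not just the restriction to the coordinate slice, which is automatic) is nondegenerate, which is why I pass explicitly to real coordinates and exhibit the block structure above. One should note that the nonvanishing of the Hessian is a pointwise, coordinate-independent statement once we know $d\Phi(p) = 0$, so computing it in the convenient chart $(x_j, y_j)$ is legitimate. The Morse condition at other points of $M$, and the perturbation to make $\im(\Phi)$ globally Morse, is deliberately \emph{not} addressed by this lemma — that is handled later via a transversality argument — so here we only need the single nondegenerate critical point at $p$.
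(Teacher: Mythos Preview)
Your proof is correct and follows essentially the same approach as the paper: both take $\Phi = \sum_j f_j^2$ built from the global holomorphic coordinate functions normalized to vanish at $p$, and both verify that $p$ is a nondegenerate critical point. The only difference is that the paper computes the holomorphic Hessian $D^2_{\mathrm{hol}}\Phi(p) = 2\sum_l df_l \otimes df_l$ and then invokes Lemma~\ref{lemma_hessian_elementary} to transfer nondegeneracy to the real Hessians of $\re(\Phi)$ and $\im(\Phi)$, whereas you compute the real Hessians directly in the coordinates $(x_j,y_j)$ and read off the block structure; your route is slightly more hands-on but avoids the need for the auxiliary lemma.
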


Before the proof, we give an elementary lemma related to critical points. Recall that if $u$ is a $C^{\infty}$ real valued function in a real manifold $M$, then we can define the Hessian as the $2$-tensor $D^2u$ where $D$ is the Levi-Civita connection. If 
for a given point $p$ we have $du(p) = 0$, then the Hessian $D^2 u(p)$ does not depend on the metric and if $x$ are local coordinates near $p$, one has 
\[
D^2 u(p) = \sum_{j,k}\frac{\partial^2 u(p)}{\partial x_j \partial x_k} \,dx^j \otimes dx^k.
\]

\begin{lemma} \label{lemma_hessian_elementary}
Let $M$ be a complex manifold and let $f \in \mO(M)$. Write $f = u + iv$ where $u$ and $v$ are real valued. If $p \in M$, one has 
\[
df(p) = 0 \ \ \Longleftrightarrow \ \ du(p) = 0 \ \ \Longleftrightarrow \ \ dv(p) = 0.
\]
At any point $p$ where $df(p) = 0$ there is a well-defined holomorphic Hessian $D^2_{\mathrm{hol}} f(p)$, which is a symmetric bilinear form on the holomorphic tangent space $T_p^{1,0} M$, such that if $z$ are complex local coordinates near $p$ one has 
\[
D^2_{\mathrm{hol}} f(p) =\sum_{j,k} \frac{\partial^2 f(p)}{\partial z_j \partial z_k} \,dz^j \otimes dz^k.
\]
The form $D^2_{\mathrm{hol}} f(p)$ is nondegenerate  on $T_p^{1,0} M$ iff $D^2 u(p)$ is nondegenerate on $T_p M$ iff $D^2 v(p)$ is nondegenerate on $T_p M$.
\end{lemma}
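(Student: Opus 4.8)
All three assertions are local at $p$, so the plan is to fix a holomorphic coordinate system $z = (z_1,\dots,z_n)$ centered at $p$, set $z_j = x_j + iy_j$, and compute everything in terms of $\partial_{z_j}$ and $\partial_{\bar z_j}$. Since $f$ is holomorphic, $\partial_{\bar z_j} f = 0$, so $df = \partial f = \sum_j (\partial_{z_j} f)\,dz_j$, and the Cauchy--Riemann relations give $\partial_{z_j} f = \partial_{x_j} u - i\,\partial_{y_j} u = \partial_{y_j} v + i\,\partial_{x_j} v$. Because the $dz_j$ are linearly independent at $p$, $df(p) = 0$ is equivalent to $\partial_{x_j}u(p) = \partial_{y_j}u(p) = 0$ for every $j$, i.e.\ to $du(p) = 0$, and likewise to $dv(p) = 0$; this gives the first equivalence.

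Next I would treat the holomorphic Hessian. At a point with $df(p) = 0$ I define $D^2_{\mathrm{hol}} f(p)$ by the displayed coordinate formula; it is symmetric since second partials commute. To see that it does not depend on the holomorphic coordinates, I would apply the chain rule under a holomorphic change $z \mapsto w(z)$, obtaining
\[
\frac{\partial^2 f}{\partial z_j \partial z_k} = \sum_{l,m} \frac{\partial^2 f}{\partial w_l \partial w_m}\,\frac{\partial w_l}{\partial z_j}\,\frac{\partial w_m}{\partial z_k} \;+\; \sum_l \frac{\partial f}{\partial w_l}\,\frac{\partial^2 w_l}{\partial z_j \partial z_k}.
\]
At the critical point the last sum vanishes since $\partial_{w_l} f(p) = 0$ (the first equivalence, applied in the $w$-coordinates), and the remaining term is exactly the transformation rule for a bilinear form on $T^{1,0}_p M$ under the holomorphic Jacobian $(\partial w_l/\partial z_j)$; hence $D^2_{\mathrm{hol}} f(p)$ is well defined. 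This is the exact analogue of the metric-independence of the ordinary Hessian at a critical point.

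For the nondegeneracy statement I would use that $u = \re f$ and $v = \im f$ are pluriharmonic. Writing $u = \tfrac12(f + \bar f)$ and using $\partial_{\bar z_k} f = 0 = \partial_{z_k}\bar f$, at $p$ one finds
\[
\partial_{z_j}\partial_{z_k} u = \tfrac12 H_{jk}, \qquad \partial_{\bar z_j}\partial_{\bar z_k} u = \tfrac12\,\overline{H_{jk}}, \qquad \partial_{z_j}\partial_{\bar z_k} u = 0,
\]
where $H = (H_{jk})$, $H_{jk} = \partial_{z_j}\partial_{z_k} f(p)$, is the matrix of $D^2_{\mathrm{hol}} f(p)$; the same computation for $v$ yields $-iH$ in place of $H$. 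Thus, in the complexified basis $\{\partial_{z_j},\partial_{\bar z_j}\}$, the Hessian of $u$ at $p$ is block diagonal, equal to $\tfrac12\,\diag(H,\overline{H})$, and that of $v$ equals $\tfrac12\,\diag(-iH,\,i\overline{H})$. Since a real symmetric bilinear form is nondegenerate iff its complexification is, and since $\diag(H,\overline H)$ (resp.\ $\diag(-iH, i\overline H)$) has determinant $|\det H|^2$ and so is invertible iff $H$ is, the three forms $D^2_{\mathrm{hol}} f(p)$, $D^2 u(p)$, $D^2 v(p)$ are nondegenerate simultaneously. Equivalently, writing $H = A + iB$ with $A,B$ real symmetric, one gets $D^2 u(p) = \left(\begin{smallmatrix} A & -B \\ -B & -A \end{smallmatrix}\right)$ and $D^2 v(p) = \left(\begin{smallmatrix} B & A \\ A & -B \end{smallmatrix}\right)$ in the $(x,y)$-coordinates, both of determinant $(-1)^n|\det H|^2$.

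I do not expect any serious obstacle: the first two parts are only the Cauchy--Riemann equations and the chain rule, and the single computation worth a line or two is the reduction in the last paragraph of the $2n\times 2n$ real Hessians of $u$ and $v$ to the $n\times n$ complex symmetric matrix $H$, which becomes transparent once pluriharmonicity is used to kill the mixed $\partial_z\partial_{\bar z}$ derivatives.
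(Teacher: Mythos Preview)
Your argument is correct and rests on the same core computation as the paper: writing $u=\tfrac12(f+\bar f)$, $v=\tfrac{1}{2i}(f-\bar f)$ and using holomorphy to see that the mixed $\partial_{z}\partial_{\bar z}$ blocks of $D^2u$ and $D^2v$ vanish, so that in the $(\partial_{z},\partial_{\bar z})$ basis these Hessians are block diagonal with blocks $\tfrac12 H,\tfrac12\bar H$ (resp.\ $-\tfrac{i}{2}H,\tfrac{i}{2}\bar H$). The only differences are in packaging: the paper deduces well-definedness of $D^2_{\mathrm{hol}}f(p)$ from the identity $D^2_{\mathrm{hol}}f=(D^2u+iD^2v)|_{T^{1,0}}$ (invariance inherited from the real Hessians) and proves the nondegeneracy equivalence by a kernel-chasing argument, whereas you use the holomorphic chain rule for well-definedness and the determinant $|\det H|^2$ for nondegeneracy; both routes are equally elementary and yield the same conclusion.
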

\begin{proof}
The first claim is obvious. The complex Hessian of $f$ at $p$ is simply given by the $T_p^{1,0*} M\otimes T_p^{1,0*} M$ part of the tensor $D^2f(p)$.
 To prove the second claim, assume that $df(p) = 0$ and extend $D^2 u(p)$ and $D^2 v(p)$ as complex bilinear forms on $\mC T_p M$. Let $z$ be complex local coordinates near $p$. We claim that for $a^j, b^j, c^k, d^k \in \mC$, 
\begin{equation} \label{real_holomorphic_hessian}
(D^2 u + i D^2 v)(a^j \partial_{z_j} + b^j \partial_{\bar{z}_j}, c^k \partial_{z_k} + d^k \partial_{\bar{z}_k}) = \frac{\partial^2 f}{\partial z_j \partial z_k} a^j c^k
\end{equation}
where everything is evaluated at $p$ (we suppress $p$ from the notation here and below).
The formula \eqref{real_holomorphic_hessian} implies that $D^2_{\mathrm{hol}} f = D^2 u + i D^2 v$ is indeed invariantly defined. To prove the claims concerning non-degeneracy, note that $D^2_{\mathrm{hol}} f (\sum_j a^j \partial_{z_j},w) = 0$ for all $w\in T^{1,0}_p M$ implies that for all $w'\in T_p M$
\[
D^2 u(\sum_j a^j \partial_{z_j} + \bar{a}^j \partial_{\bar{z}_j},w') = D^2 v(\sum_ja^j \partial_{z_j} + \bar{a}^j \partial_{\bar{z}_j},w') = 0.
\]
Thus the non-degeneracy of $D^2u$ or $D^2v$ implies non-degeneracy of $D^2_{{\rm hol}}f$.
Conversely, if $D^2 u(\sum_ja^j \partial_{z_j} + \bar{a}^j \partial_{\bar{z}_j},w') = 0$ for all $w'\in T_p M$, then taking $w'=c^k\partial_{z_k}+\bar{c}^k\partial_{\bar{z}_k}$ and using \eqref{real_holomorphic_hessian}  implies that $\re(\sum_{j}\frac{\partial^2 f}{\partial z_j \partial z_k} a^j c^k) = 0$ for all $c^k \in \mC$. Choosing real and imaginary values for $c^k$ implies that $D^2_{\mathrm{hol}} f (\sum_ja^j \partial_{z_j},w) = 0$ for all $w\in T_p^{1,0}M$. A similar statement holds for $D^2 v$.

The proof of \eqref{real_holomorphic_hessian} is straightforward. If $s, t \in \{ 1, -1 \}$, one has 
\begin{align*}
D^2 u(\p_{x_j} + is \p_{y_j}, \p_{x_k} + it \p_{y_k}) &= \p_{x_j}\p_{x_k}u - st \p_{y_j}\p_{y_k}u+ 
i( t \p_{x_j}\p_{y_k}u + s \p_{y_j}\p_{x_k}u).
\end{align*}
Thus 
\[
D^2 u = \sum_{j,k}\frac{\p^2u}{\p_{z_j}\p_{z_k}} \,dz^j \otimes dz^k +\frac{\p^2u}{\p_{z_j}\p_{\bar{z}_k}}  \,dz^j \otimes d\bar{z}^k + \frac{\p^2u}{\p_{\bar{z}_j}\p_{z_k}} \,d\bar{z}^j \otimes dz^k + \frac{\p^2u}{\p_{\bar{z}_j}\p_{\bar{z}_k}} \,d\bar{z}^j \otimes d\bar{z}^k.
\]
We now invoke the fact that $f$ is holomorphic, which implies that 
\begin{align*}
\p_{z_j} u &= \frac{1}{2} \p_{z_j} ( f + \bar{f} ) = \frac{1}{2} \p_{z_j} f, \\
\p_{\bar{z}_j} u &= \frac{1}{2} \p_{\bar{z}_j} ( f + \bar{f} ) = \frac{1}{2} \p_{\bar{z}_j} \bar{f},
\end{align*}
and consequently 
\[
D^2 u = \frac{1}{2} (\sum_{j,k}\p_{z_j}\p_{z_k}f  \,dz^j \otimes dz^k + \overline{\p_{z_j}\p_{z_k} f \,dz^j \otimes dz^k}).
\]
Similarly 
\[
D^2 v = \frac{1}{2i} (\sum_{j,k}\p_{z_j}\p_{z_k} f \,dz^j \otimes dz^k - \overline{\p_{z_j}\p_{z_k} f \,dz^j \otimes dz^k}).
\]
This proves \eqref{real_holomorphic_hessian}.
\end{proof}

\begin{proof}[Proof of Lemma \ref{lemma_phase_function_first}]
Since $M$ has local charts given by global holomorphic functions, we can find functions $f_1, \ldots, f_n \in \mO(M)$ that form a complex coordinate system near $p$. By subtracting constants, we can assume that $f_1(p) = \ldots = f_n(p) = 0$. The required function $\Phi$ will then be given by 
\[
\Phi(z) = \sum_{l=1}^n f_l(z)^2.
\]
Clearly $\Phi \in \mO(M)$ and $\Phi(p) = 0$. 
Note that the differential of $\Phi$ is given by 
\[
d\Phi = 2 \sum_{l=1}^n f_l \,df_l.
\]
Since $f_l(p) = 0$ one has $d\Phi(p) = 0$, so $p$ is a critical point of both $\re(\Phi)$ and $\im(\Phi)$.

The holomorphic Hessian of $\Phi$ in the $(f_1, \ldots, f_n)$ coordinates is given by 
\begin{align*}
D^2_{\mathrm{hol}} \Phi(p) &= 2 \sum_{l=1}^n  df_l \otimes df_l 
\end{align*}
Thus $D^2_{\mathrm{hol}} \Phi(p)(a^j \partial_{f_j}) = 2 a^l df_l$, showing that $D^2_{\mathrm{hol}} \Phi(p)$ is nondegenerate on $T_p^{1,0} M$. Lemma \ref{lemma_hessian_elementary} implies that $p$ is a nondegenerate critical point both for $\re(\Phi)$ and $\im(\Phi)$.
\end{proof}

The next step is to show that we may approximate the functions in Lemma \ref{lemma_phase_function_first} by holomorphic functions whose real and imaginary parts are Morse. For Riemann surfaces such a result was given in \cite{GT2}, based on a transversality argument from \cite{Uhlenbeck}. We will follow the same approach.

\begin{lemma} \label{lemma_phase_function_second}
Let $M$ be a compact complex manifold with $C^{\infty}$ boundary. Assume that $M$ has local charts given by global holomorphic functions. Let also $k \geq 2$. There is a dense subset $S$ of $M$ such that for any $p \in S$, there is $\Phi \in C^k(M) \cap \mO(M^{\mathrm{int}})$ having a critical point at $p$ so that both $\re(\Phi)$ and $\im(\Phi)$ are Morse functions in $M$.
\end{lemma}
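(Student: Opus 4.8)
The plan is to perturb the holomorphic function $\Phi_0$ produced by Lemma~\ref{lemma_phase_function_first} inside the space of global holomorphic functions, and to run a parametric transversality argument in the spirit of \cite{Uhlenbeck, GT2}. First, since $M$ is compact and has local charts given by global holomorphic functions, I would cover $M$ by finitely many such coordinate neighborhoods and gather the resulting coordinate functions into one finite family $g_1, \ldots, g_N \in \mO(M)$ enjoying the property that at every $x \in M$ there are indices $m_1, \ldots, m_n$ for which $dg_{m_1}(x), \ldots, dg_{m_n}(x)$ form a basis of $T_x^{1,0*} M$. Fixing an interior point $p_0 \in M^{\mathrm{int}}$ and letting $\Phi_0 \in \mO(M)$ be the function from Lemma~\ref{lemma_phase_function_first} (so $d\Phi_0(p_0) = 0$ and $D^2_{\mathrm{hol}} \Phi_0(p_0)$ is nondegenerate), I would set
\[
\Phi_b = \Phi_0 + \sum_{m=1}^N b_m g_m \in \mO(M), \qquad b = (b_1, \ldots, b_N) \in \mC^N.
\]

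The first step is to follow the critical point as $b$ varies. Since $\Phi_b$ is holomorphic, $d\Phi_b = \partial \Phi_b$, and in holomorphic coordinates near $p_0$ the map $z \mapsto (\partial_{z_j} \Phi_0(z))_j$ has invertible differential $D^2_{\mathrm{hol}} \Phi_0(p_0)$ at $p_0$. Hence, by the implicit function theorem, there are $\delta > 0$ and a fixed neighborhood $U$ of $p_0$ in $M^{\mathrm{int}}$ so that for $\abs{b} < \delta$ the function $\Phi_b$ has a unique critical point $p(b) \in U$, and $p(b) \to p_0$ as $b \to 0$.

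The heart of the argument is the transversality step. A smooth real function $u$ on $M$ is Morse with no critical point on $\partial M$ precisely when the section $x \mapsto du(x)$ of $T^*M$ is transverse to the zero section, both over $M^{\mathrm{int}}$ and over $\partial M$. Consider the universal map $F \colon \mC^N \times M \to T^*M$, $F(b,x) = (x, d(\re \Phi_b)(x))$. Because $\Phi_b$ is holomorphic, $d(\re \Phi_b) = \re(\partial \Phi_b)$, whose $(1,0)$-part is $\tfrac12 \partial \Phi_b$; thus the $b$-derivative of $F$ at a point $x$ runs through all $1$-forms whose $(1,0)$-part is $\tfrac12 \partial(\sum_m b_m g_m)(x)$, and by the spanning property of the family $\{g_m\}$ together with the $\mR$-linear isomorphism $\eta \mapsto \eta + \bar\eta$ from $T_x^{1,0*} M$ onto the real cotangent space, these exhaust $T_x^* M$. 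Hence the (purely vertical) $b$-derivative of $F$ already surjects onto the fibre $T_x^* M$ at every point of $F^{-1}(\text{zero section})$, so $F$ is transverse to the zero section over $M^{\mathrm{int}}$ and over $\partial M$. By the parametric transversality theorem (a consequence of Sard's theorem, the parameter space $\mC^N$ being finite dimensional), the set of $b \in \mC^N$ for which $\re \Phi_b$ is Morse in $M^{\mathrm{int}}$ with no critical point on $\partial M$ has complement of measure zero, hence is dense. Finally, by Lemma~\ref{lemma_hessian_elementary} the functions $\re \Phi_b$ and $\im \Phi_b$ have the same critical set and are simultaneously (non)degenerate there, so $\im \Phi_b$ is Morse exactly when $\re \Phi_b$ is.

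To finish, given $\eps > 0$ I would pick $b$ in this dense set with $\abs{b} < \delta$ small enough that $p(b) \in B(p_0, \eps)$, and set $\Phi := \Phi_b \in \mO(M) \subseteq C^k(M) \cap \mO(M^{\mathrm{int}})$ and $p := p(b) \in M^{\mathrm{int}}$; then $d\Phi(p) = 0$ and both $\re \Phi$ and $\im \Phi$ are Morse in $M$. Taking $S$ to be the set of all points of $M$ obtained in this way, as $p_0$ ranges over $M^{\mathrm{int}}$, yields a set that is dense in $M^{\mathrm{int}}$, hence in $M$, as required. I expect the main obstacle to be the verification of the transversality (spanning) condition for $F$: this is the one place where the hypothesis that $M$ has local charts given by global holomorphic functions is genuinely used, and it is what forces the preliminary construction of the finite family $g_1, \ldots, g_N$; the implicit function theorem step and the appeal to parametric transversality are then routine.
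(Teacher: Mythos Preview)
Your argument is correct and follows the same overall architecture as the paper---perturb the $\Phi_0$ of Lemma~\ref{lemma_phase_function_first}, verify a transversality/spanning condition using the global holomorphic charts, apply a Sard-type theorem to get generic Morse perturbations, and then track the critical point with the implicit function theorem---but the implementation differs in one respect worth noting. The paper works with the full infinite-dimensional Banach space $H = \{(\re f, \im f) : f \in C^k(M)\cap \mO(M^{\mathrm{int}})\}$ and invokes Uhlenbeck's transversality theorem \cite{Uhlenbeck} to conclude that the Morse locus is residual in $H$; you instead exploit compactness to reduce to a \emph{finite} family $g_1,\ldots,g_N\in\mO(M)$ whose differentials span $T^{1,0*}M$ everywhere, and then run the ordinary finite-dimensional parametric transversality theorem over $\mC^N$. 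Your reduction is more elementary (no Banach-manifold machinery, just Sard's theorem), and it has the side benefit that the perturbed phases $\Phi_b$ lie in $\mO(M)$ rather than merely $C^k(M)\cap\mO(M^{\mathrm{int}})$. The paper's formulation, on the other hand, gives a cleaner global statement (Morse functions are residual in $H$) that is independent of any particular $\Phi_0$. Both approaches hinge on exactly the same spanning computation you identified, namely that real parts of differentials of global holomorphic functions span the real cotangent space at every point.
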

\begin{proof}
Define 
\[
H = \{ (\re(f), \im(f)) \,;\, f \in C^k(M) \cap \mO(M^{\mathrm{int}}) \}.
\]
Fix some $C^{\infty}$ Riemannian metric $g$ on $M$ and define 
\[
\norm{u}_{C^k(M)} = \sum_{j=0}^k \norm{\nabla^j u}_{L^{\infty}(M)}
\]
where $\nabla$ is the Levi-Civita connection and $L^{\infty}(M) = L^{\infty}(M, dV_g)$. Equip $H$ with the norm $\norm{(u,v)} = \norm{u}_{C^k(M)} + \norm{v}_{C^k(M)}$, which makes $H$ a Banach space.

We claim that the set 
\begin{equation} \label{u_v_morse}
\{ (u, v) \in H \,;\, \text{$u$ and $v$ are Morse in $M$} \}
\end{equation}
is dense in $H$. If this holds, then for any point $p_0 \in M^{\mathrm{int}}$ we may use Lemma \ref{lemma_phase_function_first} to find $\Phi = \varphi + i \psi \in \mO(M)$ having a nondegenerate critical point at $p_0$. The density of \eqref{u_v_morse} implies that there are $\Phi^{(l)} = \varphi^{(l)} + i \psi^{(l)}$ in $C^k(M) \cap \mO(M^{\mathrm{int}})$ such that $\varphi^{(l)}$ and $\psi^{(l)}$ are Morse and $\varphi^{(l)} \to \varphi$, $\psi^{(l)} \to \psi$ in $C^k(M)$. It follows that $\mathrm{Hess}_g(\varphi^{(l)}) \to \mathrm{Hess}_g(\varphi)$ on $M$. In particular, if $x$ are Riemannian normal coordinates at $p_0$, both matrices $(\partial_{x_j}\p_{x_k} \varphi^{(l)})$ and $(\partial_{x_j}\p_{x_k} \varphi)$ are nondegenerate at $p_0$ for $l$ large, and by the inverse function theorem the vector fields $(\partial_{x_j} \varphi^{(l)})$ and $(\partial_{x_j} \varphi)$ are invertible maps in some neighborhood of $p$ which is independent of $l$. It follows that there exist $p^{(l)} \in M^{\mathrm{int}}$ with $d\varphi^{(l)}(p^{(l)}) = 0$ and $p^{(l)} \to p_0$. This proves that the set 
\begin{multline*}
S = \{ p \in M \,;\, \text{$df(p) = 0$ for some $f \in C^k(M) \cap \mO(M^{\mathrm{int}})$} \\
 \text{with $\re(f), \im(f)$ Morse} \}
\end{multline*}
is dense in $M$. The result then follows.

To prove that \eqref{u_v_morse} is dense in $H$, we repeat the argument in \cite{GT2} and consider the map 
\[
m: H \times M \to T^* M, \ \ m((u,v), p) = (p, du(p)).
\]
We also write $m_{u,v}: M \to T^* M, \ m_{u,v}(p) = (p, du(p))$. We first observe that $p$ is a critical point of $u$ iff $m_{u,v}(p) \in T_0^* M$, where $T_0^* M$ is the zero section of $T^* M$. If $p$ is a critical point, $X \in T_p M$ and if $\gamma(t)$ is a smooth curve in $M$ with $\dot{\gamma}(0) = X$, we compute 
\[
(D_p m_{u,v}) X = \frac{d}{dt} (\gamma(t), du(\gamma(t))) \Big|_{t=0} = (X, D^2 u(X)).
\]
Thus we have proved that for $(u,v) \in H$, 
\begin{align}
 &\text{$u$ is Morse} \label{u_morse_transversality}\\
 &\Longleftrightarrow \text{Ran}(D_p m_{u,v}) + T_{(p,du(p))}(T_0^* M) = T_p M \oplus T_p^* M \text{ when $m_{u,v} \in T_0^* M$}. \notag
\end{align}
This means that $u$ is Morse iff $m_{u,v}$ is transverse to the zero section $T_0^* M$ in $T^* M$ (see \cite{Uhlenbeck} for this terminology).

We next show that the set 
\begin{equation} \label{u_v_residual}
\{ (u,v) \in H \,;\, \text{$m_{u,v}$ is transverse to $T_0^* M$} \}
\end{equation}
is residual (i.e.\ a countable intersection of open dense sets) in $H$. In fact, this follows from \cite[Transversality theorem 2]{Uhlenbeck}, see also \cite{GT2}, provided that we can show that $m$ is transverse to $T_0^* M$. Let $(u,v) \in H$, $p \in M$ be such that $p$ is a critical point of $u$ (i.e.\ $m(u,v,p) \in T_0^* M$), let $(\hat{u}, \hat{v}) \in H$ and $X \in T_p M$, and let $\gamma$ be a smooth curve in $M$ with $\dot{\gamma}(0) = X$. We compute 
\[
(D_{u,v,p} m)(\hat{u}, \hat{v}, X) = \frac{d}{dt} (\gamma(t), d(u + t \hat{u})(\gamma(t))) \Big|_{t=0} = (X, D^2 u(X) + d \hat{u}(p)).
\]
Thus $m$ is transverse to $T_0^* M$ provided that $\{ d\hat{u}(p) \,;\, (\hat{u}, \hat{v}) \in H \}$ spans $T_p^* M$ for any $p \in M$. This last fact follows since $M$ has local charts given by global holomorphic functions. To see this, let $p \in M$ and choose $f_1, \ldots, f_n \in \mO(M)$ which form a complex chart near $p$. Write $f_j = u_j + i v_j$ and assume that for some $a^j, b^j \in \mR$ one has 
\[
\sum_{j=1}^n a^j du_j(p) + b^j dv_j(p) = 0.
\]
Since $2u_j = f_j + \bar{f}_j$ and $2i v_j = f_j - \bar{f}_j$, we obtain 
\[
0 =\sum_{j=1}^n c^j df_j + \bar{c}^j d\bar{f}_j
\]
where $c_j = a_j - i b_j$. Since $\{Êdf_j, d\bar{f}_j \}$ span the complex cotangent space at $p$, we get $c^j = 0$ for $j=1,\dots,n$. Thus $a^j = b^j = 0$, and $\{ du_j(p), dv_j(p) \}_{j=1,\dots,n}$ span the real cotangent space at $p$. The fact that $\re(-if) = \im(f)$ implies that real parts of functions in $\mO(M)$ span $T_p^* M$ as well.

We have now proved that the set \eqref{u_v_residual} is residual. The observation \eqref{u_morse_transversality} and Lemma \ref{lemma_hessian_elementary} show that the set \eqref{u_v_morse} is dense in $H$ as required.
\end{proof}

Finally, we invoke the assumption that $M$ is holomorphically separable to construct the amplitude in Theorem \ref{prop_complex_cgo}.

\begin{proof}[Proof of Proposition \ref{prop_complex_cgo}]
Let $S$ be as in Lemma \ref{lemma_phase_function_second}, let $p \in S$, and let $\Phi \in C^k(M) \cap \mO(M^{\mathrm{int}})$ be the function given in Lemma \ref{lemma_phase_function_second} so that $d\Phi(p) = 0$ and $\im(\Phi)$ is Morse. Let $\{ p, p_1, \ldots, p_N \}$ be the critical points of $\im(\Phi)$ in $M$. Since $M$ is holomorphically separable, for each $j$ we may find $a_j \in \mO(M)$ such that $a_j(p) = 1$ and $a_j(p_j) = 0$. It is enough to choose $a = a_1 \cdots a_N$.
\end{proof}

\providecommand{\bysame}{\leavevmode\hbox to3em{\hrulefill}\thinspace}
\providecommand{\href}[2]{#2}

\bibliographystyle{alpha}

\end{document}